\newcommand{\R}{\mathbb{R}}
\newcommand{\Z}{\mathbb{Z}}
\newcommand{\N}{\mathbb{N}}
\newcommand{\bigO}{\mathcal{O}}
\newcommand{\diff}{\mathrm{d}}
\DeclareMathOperator{\supp}{supp}
\newcommand{\abs}[1]{\lvert#1\rvert}
\newcommand\norm[1]{\left\lVert#1\right\rVert}
\newtheorem{theorem}{Theorem}[section]
\newtheorem{lemma}[theorem]{Lemma}
\newtheorem{proposition}[theorem]{Proposition}
\newtheorem{remark}[theorem]{Remark}
\newtheorem*{main-theorem}{Main Theorem}
\newtheorem*{remark*}{Remark}
\newtheorem*{lemma*}{Lemma A.1}
\numberwithin{equation}{section}
\begin{document}

\title[Solitary wave solutions of Whitham--Boussinesq systems]{Solitary wave solutions to a class of Whitham--Boussinesq systems}

\author{Dag Nilsson}
\address{Department of Mathematical Sciences, Norwegian University of Science and Technology, 7491 Trondheim, Norway.}
\email{dag.nilsson@ntnu.no}

\author{Yuexun Wang}
\address{Department of Mathematical Sciences, Norwegian University of Science and Technology, 7491 Trondheim, Norway.}
\email{yuexun.wang@ntnu.no}

\thanks{D.N.was supported by an ERCIM `Alain Bensoussan' Fellowship. Y.W. acknowledges the support by grants nos. 231668 and 250070 from the Research Council of Norway.}
 
\subjclass[2010]{76B15; 76B25, 35S30, 35A20}
\keywords{Whitham-type equations, dispersive equations, solitary wave}

\begin{abstract}
In this note we study solitary wave solutions of a class of Whitham--Boussinesq systems which includes the bi-directional Whitham system as a special example. The travelling wave version of the evolution system can be reduced to a single evolution equation, similar to a class of equations studied by Ehrnstr{\"o}m, Groves and Wahl{\'e}n \cite{EGW}. In that paper the authors prove the existence of solitary wave solutions using a constrained minimization argument adapted to noncoercive functionals, developed by Buffoni \cite{Buffoni2004}, Groves and Wahl{\'e}n \cite{MR2847283}, together with the concentration-compactness principle.
\end{abstract}

\maketitle
\section{Introduction}\label{Introduction}
This work is devoted to the study of solitary wave solutions  of the Whitham--Boussinesq system
\begin{equation}\label{eq:bdw}
\begin{aligned}
\partial_t\eta &=-K\partial_xu-\partial_x(\eta u)\\
\partial_tu &=-\partial_x\eta-u\partial_xu.
\end{aligned}
\end{equation}
A solitary wave is a solution of the form  
\begin{align}\label{eq:travelling wave ansatz}
\eta(x,t)=\eta(x-ct),\  u(x,t)=u(x-ct),
\end{align}
such that \(\eta(x-ct),u(x-ct)\longrightarrow 0\) as \(|x-ct|\longrightarrow \infty\).
Here, $\eta$ denotes the surface elevation, \(u\) is the rightward velocity at the surface, and $K$ is a Fourier multiplier operator  defined by
\begin{equation*}\label{eq:K}
\mathcal{F}(Kf)(k)=m(k)\hat{f}(k),
\end{equation*}  
for all \(f\) in the Schwartz space \(\mathcal{S}(\mathbb{R})\). More specifically, we require that
\begin{itemize}
\item[(A1)] The symbol $m\in S_\infty^{m_0}(\R)$ for some $m_0<0$, that is
\begin{align*}
|m^{(\alpha)}(k)|\leq C_\alpha(1+|k|)^{m_0-\alpha},\  \alpha\in \mathbb{N}_0. 
\end{align*}
\item[(A2)] The symbol \(m:\R\rightarrow\R\) is even and satisfies \(m(0)>0\), $m(k)<m(0),\  \text{for}\ k\neq 0$ and 
\begin{align*}
m(k)=m(0)+\frac{m^{(2j_*)}(0)}{(2j_*)!}k^{2j_*}+r(k),
\end{align*}                                                                                                                                                                                                         
for some \(j_*\in \mathbb{N}_+\), where \(m^{(2j_*)}(0)<0\) and \(r(k)=\mathcal{O}(k^{2j_*+2})\) as \(k\rightarrow 0\).\\
\end{itemize}
As an example we have $m(k)=\tanh(k)k^{-1}$, which yields the bi-directional Whitham (BDW) system, and this choice of symbol is the main motivation for studying \eqref{eq:bdw}. The BDW system was formally derived in \cite{MR2991247,MR3390078} from the incompressible Euler equations to model fully dispersive shallow water waves whose propagation is allowed to be both left- and rightward,  and appeared in \cite{MR3060183,MR3668593} as a full dispersion system in the Boussinesq regime with the dispersion of the water
waves system. There have been several investigations on the BDW system: local well-posedness \cite{2017arXiv170804551E,MR3763731} (in homogeneous Sobolev spaces at a positive background), a logarithmically cusped wave of greatest height \cite{Ehrnström2018}. There are also numerical results, investigating the validity of the BDW system as a model of waves on shallow water \cite{MR3844340}, numerical bifurcation and spectral stability \cite{doi:10.1111/sapm.12221} and the observation of dispersive shock waves 
\cite{MR3523508}. However there are no results on the existence of solitary wave solutions. 

We also mention that one can include the effects of surface tension in the BDW system by choosing $m(k)=\tanh(k)k^{-1}(1+\beta k^2), \beta>0$. It was recently shown in \cite{2018arXiv180504372K} that \eqref{eq:bdw} is locally well-posed for this choice of symbol. However, the above symbol with $\beta>0$ is not included in the class of symbols considered in the present work.          
Moreover, in \cite{MR3749383, DDK, ED}, other types of fully dispersive Whitham-Boussinesq systems are considered. We also mention the generalized class of Green--Nagdhi equations introduced in \cite{MR3564304}, which was shown to posses solitary wave solutions in \cite{MR3841973}.

\section{Solitary wave solutions to the Whitham equation}
In order to prove existence of solitary wave solutions of \eqref{eq:bdw} our strategy will be to reduce this to a problem that is similar to one studied in \cite{EGW}. For this reason we first discuss the results and methods of that paper.
In \cite{EGW} the authors prove the existence of solitary wave solutions of the pseudodifferential equation
\begin{equation}\label{EGW-eq}
u_t+\big(Ku+\tilde{n}(u)\big)_x=0,
\end{equation}
where $K$ have properties $(A1)$, $(A2)$ and the nonlinearity $\tilde{n}$ satisfies
\begin{itemize}
\item[(A3)] The nonlinearity $\tilde{n}$ is a twice continuously differentiable function $\mathbb{R}\rightarrow \mathbb{R}$ with
\begin{equation*}\label{egw}
\tilde{n}(x)=\tilde{n}_p(x)+\tilde{n}_r(x),
\end{equation*}
in which the leading order part of the nonlinearity takes the form $\tilde{n}_p(x)=c_p\abs{x}^p$ for some $c_p\neq 0$ and $p\in [2,4j_*+1)$ or $\tilde{n}_p(x)=c_px^p$ for some $c_p>0$ and odd integer $p$ in the range $p\in [2,4j_*+1)$, while
\begin{equation*}
\tilde{n}_r(x)=\mathcal{O}(\abs{x}^{p+\delta}),\quad \tilde{n}_r'(x)=\mathcal{O}(\abs{x}^{p+\delta-1})
\end{equation*}
for some $\delta>0$ as $x\rightarrow 0$.
\end{itemize}
In particular, the uni-directional Whitham equation, introduced in \cite{MR0671107}, belongs to this class of equations \eqref{EGW-eq},  with $m(k)=\sqrt{\tanh(k)k^{-1}}$. The Whitham equation possesses periodic travelling waves \cite{MR2555644} and solitary waves \cite{EGW}, moreover the solitary waves decay exponentially \cite{MR3603270}. 
It was recently confirmed that the Whitham equation possesses a highest cusped wave \cite{2016arXiv160205384E}, as conjectured by Whitham.

Under the travelling wave ansatz: $u(t,x)=u(x-c t)$, the equation \eqref{EGW-eq} becomes
\begin{equation}\label{travellingwave-EGW}
Ku-c u+\tilde{n}(u)=0.
\end{equation}
The existence of solutions of \eqref{travellingwave-EGW} is established via a related minimization problem. Let
\begin{equation*}
\tilde{\mathcal{E}}(u)=-\frac{1}{2}\int_\mathbb{R}uKu\ \mathrm{d}x-\int_\mathbb{R}\tilde{N}(u)\ \mathrm{d}x,\quad \mathcal{I}(u)=\frac{1}{2}\int_\mathbb{R}u^2\ \mathrm{d}x
\end{equation*}
with 
\begin{align*}
\tilde{N}(x)&=\tilde{N}_{p+1}(x)+\tilde{N}_r(x),\\
\tilde{N}_{p+1}(x)&=\int_0^x\tilde{n}_p(s)\ \mathrm{d}s=\frac{ c_px^{p+1}}{p+1}, \text{ or }\frac{ c_px\abs{x}^{p}}{p+1}, \\
\tilde{N}_r(x)=&\int_0^x\tilde{n}_r(s)\ \mathrm{d}s=\mathcal{O}(\abs{x}^{p+1+\delta}).
\end{align*}
Let $q,R>0$ and
\begin{equation*}
V_{q,R}:=\{u\in H^1(\mathbb{R}):\ \mathcal{I}(u)=q,\ \norm{u}_{H^1}<R\}.
\end{equation*}
Minimizers of $\tilde{\mathcal{E}}$ over $V_{q,R}$ (that are not on the boundary) satisfy the Euler-Lagrange equation
\begin{equation}\label{EL-egw}
\mathrm{d}\tilde{\mathcal{E}}(u)+\nu\mathrm{d}\mathcal{I}(u)=0,
\end{equation}
for a Lagrange multiplier $\nu$, and \eqref{EL-egw} is precisely \eqref{travellingwave-EGW}, with $c=\nu$. In \cite{EGW} the authors show that there exist solutions of the minimization problem
\[
\arg \underset{V_{q,R}} \inf  \tilde{\mathcal{E}}(u),
\]
which by the above argument yields travelling wave solutions of \eqref{EGW-eq}.
The existence of minimizers is established using methods developed in \cite{Buffoni2004,MR2847283} and we give here a brief outline of the proof. The functional $\tilde{\mathcal{E}}$ is not coercive and since the domain is unbounded one cannot use the Rellich--Kondrachov theorem. In particular, direct methods cannot be used to obtain a minimizer. Because of this one needs to study a related penalized functional acting on periodic functions. 
Let \(P>0\) and \(L_P^2\) be the space of \(P\)- periodic, locally square-integrable functions with Fourier-series representation 
\[
w(x)=\frac{1}{\sqrt{P}}\sum_{k\in\Z}\widehat{w}(k)\exp(2\pi ikx/P),
\]
with 
\[
\widehat{w}(k):=\frac{1}{\sqrt{P}}\int_{-\frac{P}{2}}^{\frac{P}{2}}w(x)\exp(-2\pi ikx/P)\,\diff x.
\]
For \(s\geq0\),  we define  
\[
H^s_P:=\{w\in L^2_P:\ \|w\|_{H^s_P}<\infty\},
\]
where the norm is given by 
\[
\|w\|_{H^s_P}:=\left(\sum_{k\in \Z}\bigg(1+\frac{4\pi^2k^2}{P^2}\bigg)^s|\widehat{w}(k)|^2 \right)^{\frac{1}{2}}.
\]
The authors \cite{EGW} studied the following penalized functional 
\begin{equation*}
\tilde{\mathcal{E}}_{P,\varrho}(u):=\varrho(\norm{u}_{H_P^1}^2)+\tilde{\mathcal{E}}_p(u),
\end{equation*}
over the set
\begin{equation*}
V_{P,q,R}:=\{u\in H_P^1:\ \mathcal{I}_P(u)=q, \ \norm{u}_{H_P^1}<2R\},
\end{equation*}
where $\tilde{\mathcal{E}}_P, \ \tilde{\mathcal{I}}_P$ are the same functionals as $\tilde{\mathcal{E}},\ \tilde{\mathcal{I}}$ but where the integration is over $[-P/2,P/2]$, and $\varrho:[0,(2R)^2]\mapsto [0,\infty)$ is a penalization function such that $\varrho(t)=0$ whenever $t\in[0,R^2]$ and $\varrho(t)\rightarrow \infty$ as $t\rightarrow (2R)^2$. The penalization function makes $\tilde{\mathcal{E}}_{P,\varrho}$ coercive, and the fact that we are now working in $H_P^1$ allows the use of the Rellich-Kondrachov theorem. It is then an easy task to show that there exists a minimizer $u_P\in V_{P,q,2R}$, of $\tilde{\mathcal{E}}_{P,\varrho}$. The next step is to show that $u_P$ in fact minimizes $\tilde{\mathcal{E}}_P$ over $V_{q,R}$. This is immediate after showing that
\begin{equation*}\label{main-ineq}
\norm{u_P}_{H_P^1}^2\leq Cq,
\end{equation*}
and choosing $q$ sufficiently small. The other key ingredient of the proof is the concentration compactness theorem \cite{MR778974}. In the application of this theorem, the main task is to show that `dichotomy' does not occur. This is done using proof by contradiction, where the contradiction is arrived at using the strict subadditivity of
\begin{equation*}
I_q:=\arg\underset{V_{q,R}}\inf\tilde{\mathcal{E}}(u),
\end{equation*}
as a function of $q$. The strict subadditivity of $I_q$ is established by using a special minimizing sequence for $\tilde{\mathcal{E}}$, constructed from the minimizers $u_P$. In addition it is necessary to decompose $u$ into high and low frequencies in order to get satisfactory estimates on $\norm{u}_{L^\infty}$, see \cite[Corollary 4.5]{EGW}.
It is an easy task to show that `vanishing' cannot occur either. Therefore, from the concentration compactness theorem, `concentration' is the only possibility and the existence of minimizers then follows from a standard argument.

Under the additional assumption that 
\begin{itemize}
	\item[(A4)] $\tilde{n}\in C^{2j_*}(\mathbb{R})$ with 
	\[\tilde{n}_r^{(j)}(x)=\mathcal{O}(\abs{x}^{p+\delta-j}),\ j=0,\ldots, ,2j_*,\] 
\end{itemize} 
it is possible to relate the minimizers of $\tilde{\mathcal{E}}$ to those of $\tilde{\mathcal{E}}_{lw}$, where
\begin{equation*}
\tilde{\mathcal{E}}_{lw}(u)=-\int_\mathbb{R}\left(\frac{m^{(2j_*)}(0)}{2(2j_*)!}(u^{(j_*)})^2+ \tilde{N}_{p+1}(u)\right)\, \mathrm{d}x. 
\end{equation*} 
More specifically,
\begin{equation*}
\sup_{u\in \tilde{D}_q}\text{dist}_{H^{j_*}(\mathbb{R})}(S_{lw}^{-1}u,\tilde{D}_{lw})\rightarrow 0,\quad \text{ as }q\rightarrow 0,
\end{equation*}
where $\tilde{D}_{lw}$ is the set of minimizers of $\tilde{\mathcal{E}}_{lw}$ over the set 
\[\{u\in H^{j_*}(\mathbb{R}):\ \mathcal{I}(u)=1\},\]
and \(\tilde{D}_q\) is the set of minimizers of $\tilde{\mathcal{E}}$ over $V_{q,R}$ and 
\[(S_{lw}u)(x):=q^\alpha u(q^\beta x)\] 
is the 'long-wave test function' with 
\begin{equation}\label{alpha-beta}
\alpha=\frac{2 j_*}{4j_*+1-p},\quad \beta=\frac{p-1}{4j_*+1-p}.
\end{equation}
The numbers $\alpha$ and $\beta$ are chosen in such a way that 
\[2\alpha-\beta=1,\quad (p-1)\alpha=2j_*\beta.\] 
This choice of $\alpha$, $\beta$ appear naturally when deriving the long-wave approximation of \eqref{travellingwave-EGW}. 
The functional $\tilde{\mathcal{E}}_{lw}$ is related to $\tilde{\mathcal{E}}$ via (see \cite[Lemma 3.2]{EGW})
\begin{equation*}
\tilde{\mathcal{E}}(S_{lw}u)=-qm(0)+q^{1+(p-1)\alpha}\tilde{\mathcal{E}}_{lw}(u)+o(q^{1+(p-1)\alpha}), 
\end{equation*}
for any \(u\in W:=\{u\in H^{2j_*}(\R):\ \norm{u}_{H^{2j_*}}<S\}\) with \(S\) being a positive constant. 

We mention here a recent work \cite{arXiv:1802.10040} where they use an entirely different approach to prove the existence of small amplitude solitary wave solutions of the Whitham equation.
\section{Solitary wave solutions to the Whitham--Boussinesq system}\label{sec-bi-directional-whitham}
\subsection{Formulation as a constrained minimization problem}
In the present work we seek solitary wave solutions of \eqref{eq:bdw}, and the idea is to reformulate \eqref{eq:bdw} in such a way that the method of \cite{EGW} can be applied. Under the travelling wave ansatz \eqref{eq:travelling wave ansatz}, the system \eqref{eq:bdw} then becomes 
\begin{align}
c\eta &=Ku+\eta u,  \label{2}\\
cu &=\eta+\frac{u^2}{2}   \label{3}.
\end{align}
It follows from \eqref{3} that \(\eta=u(c-\frac{u}{2})\), and if we insert this into \eqref{2} then we find that 
\begin{equation}\label{4}
Ku-u(u-c)(\frac{u}{2}-c)=0.
\end{equation}

We first formally assume that \(\|u\|_{L^\infty}\ll c \) to formulate \eqref{4} into a variational problem. This is no restriction since the constructed solutions will automatically satisfy this smallness condition (see Theorem \eqref{th:main}).
Let \(w=\frac{u}{c}(\frac{u}{c}-2)\), so that \(u=c-c\sqrt{1+w}\). The map \(w \mapsto u\) is well-defined, since
\begin{align*}
\norm{w}_{L^\infty}\leq \norm{\frac{u}{c}}_{L^\infty}\norm{\frac{u}{c}-2}_{L^\infty}\lesssim \norm{\frac{u}{c}}_{L^\infty}\ll1,\\
\end{align*}
We then may rewrite the equation \eqref{4} using the new unknown \(w\) as 
\begin{align}\label{vareq}
\frac{2}{\sqrt{1+w}}K(\sqrt{1+w}-1)-\lambda w=0,
\end{align}
with $\lambda=c^2$.
We now define 
\begin{equation*}
\mathcal{E}(w)=\underbrace{-\frac{1}{2}\int_\mathbb{R}wKw\ \mathrm{d}x}_{:= \mathcal{K}(w)}\underbrace{-\int_\mathbb{R}N(w)\ \mathrm{d}x}_{:=\mathcal{N}(w)},
\end{equation*}
where
\begin{align*}
N(w)=2\Psi(w)Kw+2\Psi(w)K(\Psi(w)), 
\end{align*}
\begin{align*}
 \Psi(w)=\sqrt{1+w}-1-\frac{w}{2}=-\frac{w^2}{8}+\Psi_r(w),
\end{align*}
\begin{align*}
\ \Psi_r(x)=\mathcal{O}(x^3).
\end{align*}
To extract the lower-order parts we also write 
\begin{equation*}
N(w)=N_h(w)+N_l(w),
\end{equation*}
with 
\begin{equation*}
N_h(w)=-\frac{w^2}{4}Kw,\quad N_l(w)=2\Psi(w)Kw+2\Psi(w)K(\Psi(w)).
\end{equation*}
We then note that
\[
\diff \mathcal{E}(w)+\lambda \diff \mathcal{I}(w)=0
\]
is precisely \eqref{vareq}. Hence, \(w\) is a critical point of \(\mathcal{E}\) under the constraint \(\mathcal{I}(w)=q\), if and only if \(u=c-c\sqrt{1+w}\) is a solution of \eqref{4}, with \(\lambda=c^2\). 
We will find critical points of 
\(\mathcal{E}(w)+\lambda  \mathcal{I}(w)\) by considering the minimization problem 
\[
\arg \underset{V_{q,R}} \inf  \mathcal{E}(w).
\]
Here we are minimizing a functional $\mathcal{E}$ of almost the same type as in \cite{EGW}, with $p=2$, but with a slightly different nonlinearity. In our case, the nonlocal operator $K$ appears in the nonlinear term $N$. However, since $K$ is a bounded smoothing operator, it is not hard to show that the methods used in \cite{EGW} can be applied to the functional $\mathcal{E}$. However, the results \cite[Lemma 2.3, Lemma 3.2, Lemma 3.3]{EGW} require a bit more care, in particular it is important to know how $\mathcal{N}$ acts under the long-wave scaling, and we therefore include the proofs of these results in the next subsection. We finally have the following existence result:
\begin{theorem}\label{th:main}  
There exists \(q_*>0\) such that the following statements hold for each \(q\in(0,q_*)\).

(\(\romannumeral1\)) The set \(D_q\) of minimizers of \(\mathcal{E}\) over the set \(V_{q,R}\) is nonempty and the estimate \(\|w\|_{H^1(\R)}^2=\bigO(q)\) holds uniformly over \(w\in D_q\). Each element of \(D_q\) is a solution of the
travelling wave equation \eqref{vareq}; the squared wave speed \(c^2\) is the Lagrange multiplier in this constrained
variational principle. 

(\(\romannumeral2\)) Let \(s<1\) and suppose that \(\{w_n\}_{n\in\mathbb{N}_0}\) is a minimizing sequence for \(\mathcal{E}\) over \(V_{q,R}\). 
There exists a sequence \(\{x_n\}_{n\in\mathbb{N}_0}\) of real numbers such that a subsequence of \(\{w_n(\cdot+x_n)\}_{n\in\mathbb{N}_0}\) converges in \(H^s(\R)\) to a function in \(D_q\). 
\end{theorem}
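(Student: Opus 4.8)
The plan is to run the constrained-minimisation scheme of \cite{EGW} with only cosmetic changes, since $\mathcal{E}=\mathcal{K}+\mathcal{N}$ has exactly the structure of the functional treated there with $p=2$; the one genuinely new feature is that the smoothing multiplier $K$ now sits inside the nonlinearity $\mathcal{N}$, so every estimate on the nonlinear part — and especially its behaviour under the long-wave rescaling — has to be revisited, which is precisely why the analogues of \cite[Lemma 2.3, Lemma 3.2, Lemma 3.3]{EGW} are proved separately in the next subsection. First I would set up the penalised periodic problem: fix $R$ and a penalisation function $\varrho$ as in Section~2, and for $P>0$ minimise
\[
\mathcal{E}_{P,\varrho}(w):=\varrho(\|w\|_{H^1_P}^2)+\mathcal{K}_P(w)+\mathcal{N}_P(w)
\]
over $V_{P,q,2R}$. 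Since $m$ is bounded and $H^1_P$ embeds compactly into $C^0$, both $\mathcal{K}_P$ and $\mathcal{N}_P$ are weakly continuous on bounded subsets of $H^1_P$, so $\mathcal{E}_{P,\varrho}$ is weakly lower semicontinuous and, by the penalisation, coercive on $V_{P,q,2R}$; the direct method together with the Rellich--Kondrachov theorem then yields a minimiser $w_P$. The key a priori bound $\|w_P\|_{H^1_P}^2\le Cq$, with $C$ independent of $P$, is the analogue of \cite[Lemma 2.3]{EGW} and its proof goes through once one observes that $K$ is bounded on $L^2_P$ and that the smoothing nonlinearity obeys $|\mathcal{N}_P(w)|\lesssim\|w\|_{H^1_P}^3$. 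For $q<q_*$ small this forces $\|w_P\|_{H^1_P}<R$, so the penalisation is inactive, $w_P$ solves the periodic Euler--Lagrange equation, and $w_P$ minimises $\mathcal{E}_P$ over $\{w\in H^1_P:\mathcal{I}_P(w)=q,\ \|w\|_{H^1_P}<R\}$.

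The heart of the argument is the strict subadditivity $I_q<I_{q_1}+I_{q-q_1}$, $0<q_1<q$, of $I_q:=\inf_{V_{q,R}}\mathcal{E}$. As in \cite{EGW}, the periodic minimisers $w_P$ are first turned into a minimising sequence $\{w_n\}$ for $\mathcal{E}$ on $\R$ — translating so that they concentrate, truncating, and controlling $\|w_n\|_{L^\infty}$ via the high/low frequency splitting of \cite[Corollary 4.5]{EGW} — and the subadditivity then follows from testing $\mathcal{E}$ against the long-wave ansatz $(S_{lw}w)(x)=q^{\alpha}w(q^{\beta}x)$ with $\alpha,\beta$ as in \eqref{alpha-beta} for $p=2$, i.e. $\alpha=\tfrac{2j_*}{4j_*-1}$ and $\beta=\tfrac{1}{4j_*-1}$. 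The key identity is
\[
\mathcal{E}(S_{lw}w)=-q\,m(0)+q^{1+\alpha}\mathcal{E}_{lw}(w)+o(q^{1+\alpha}),\qquad \mathcal{E}_{lw}(w):=-\int_\R\Bigl(\tfrac{m^{(2j_*)}(0)}{2(2j_*)!}(w^{(j_*)})^2-\tfrac{m(0)}{4}w^3\Bigr)\,\diff x,
\]
valid uniformly for $w$ ranging over a fixed ball of $H^{2j_*}(\R)$ with $\mathcal{I}(w)=1$. In $\mathcal{K}$ this comes from the Fourier-side Taylor expansion $m(q^{\beta}\xi)=m(0)+\tfrac{m^{(2j_*)}(0)}{(2j_*)!}q^{2j_*\beta}\xi^{2j_*}+r(q^{\beta}\xi)$ together with the defining relations $2\alpha-\beta=1$ and $2j_*\beta=\alpha$; in $\mathcal{N}$, only $N_h(w)=-\tfrac{w^2}{4}Kw$ contributes at cubic order, and there one replaces $Kw$ by $m(0)w$ — the replacement error $Kw-m(0)w$ carrying an extra factor $q^{2j_*\beta}=q^{\alpha}$ under the scaling (because $m(0)-m(\xi)$ vanishes to order $2j_*$ at the origin) and hence dropping into the $o(q^{1+\alpha})$ remainder along with $N-N_h$ (quartic and higher in $w$) and the Taylor remainder $r$ from (A2). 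Since $2<4j_*+1$, the Gagliardo--Nirenberg inequality shows $\mathcal{E}_{lw}$ is bounded below on $\{\mathcal{I}=1\}$, and rescaling a fixed nonpositive profile shows its infimum $I_{lw}$ is strictly negative; inserting this into the identity — and running it in reverse on an actual minimiser of $\mathcal{E}$ over $V_{q,R}$, which is where the a priori bound enters, for the matching lower bound — gives $I_q=-q\,m(0)+q^{1+\alpha}I_{lw}+o(q^{1+\alpha})$ with $I_{lw}<0$, whereupon the strict superadditivity of $t\mapsto t^{1+\alpha}$ together with the cancellation of the linear-in-$q$ term yields the strict subadditivity exactly as in \cite{EGW} (and, in passing, $I_q<0$ for small $q$).

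Finally I would feed the a priori bound and the strict subadditivity into Lions' concentration--compactness lemma \cite{MR778974}, applied to an arbitrary minimising sequence $\{w_n\}\subset V_{q,R}$. Vanishing is excluded because it forces $\mathcal{N}(w_n)\to0$, hence $\liminf\mathcal{E}(w_n)\ge0>I_q$; dichotomy is excluded by the strict subadditivity in the standard way. Thus the mass concentrates: after a translation, a subsequence of $\{w_n(\cdot+x_n)\}$ is tight in $L^2(\R)$, converges strongly in $L^2(\R)$, and — being bounded in $H^1$ — converges strongly in $H^s(\R)$ for every $s<1$ by interpolation. Weak lower semicontinuity of $\mathcal{E}$ on bounded subsets of $H^1$ — $\mathcal{K}$ is even weakly continuous there since $m$ decays, and $\mathcal{N}$ is continuous along the $H^s$-convergent subsequence — then shows that the limit, which lies in $V_{q,R}$ by the a priori bound, is an element of $D_q$, proving~(ii). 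In particular $D_q\ne\emptyset$; passing the a priori bound to the limit gives $\|w\|_{H^1(\R)}^2=\bigO(q)$ uniformly over $D_q$; and since, by that bound, minimisers do not meet the boundary $\{\|w\|_{H^1}=R\}$ for $q$ small, they satisfy $\diff\mathcal{E}(w)+\lambda\,\diff\mathcal{I}(w)=0$, which is \eqref{vareq} with $\lambda=c^2$ the Lagrange multiplier — this is~(i).

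The only genuinely delicate point is the long-wave analysis of $\mathcal{N}$: one must verify that the nonlocal operator inside $N$ does not disturb the scaling — concretely, that under the long-wave scaling $Kw-m(0)w$ picks up the extra factor $q^{2j_*\beta}=q^{\alpha}$, so that its contribution to the already-$q^{1+\alpha}$ cubic term is $o(q^{1+\alpha})$ and therefore harmless, and that $N-N_h$ and the Taylor remainder $r$ are likewise $o(q^{1+\alpha})$ after rescaling. Matching these powers up — equivalently, proving the analogues of \cite[Lemma 3.2, Lemma 3.3]{EGW} — is where the real work lies; everything else is routine once one records that $K$ is bounded and smoothing.
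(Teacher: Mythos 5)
Your overall strategy is the same one the paper relies on (the paper does not spell out a proof of Theorem~\ref{th:main} at all: it delegates to the scheme of \cite{EGW}, and only supplies the three adapted ingredients, Lemmas~\ref{le:long wave}, \ref{lemma:approximate} and \ref{le:regularity}), and you correctly identify that the genuinely new work is the behaviour of $\mathcal{N}$, with $K$ inside it, under the long-wave scaling and under the periodisation. However, two of your justifications would not survive scrutiny, and one of them concerns the heart of the argument. You claim that strict subadditivity of $I_q$ follows from the two-sided expansion $I_q=-m(0)q+I_{lw}q^{1+\alpha}+o(q^{1+\alpha})$ together with the strict superadditivity of $t\mapsto t^{1+\alpha}$. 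This fails for lopsided splittings: writing $\tilde I_q:=I_q+m(0)q$, the superadditivity gap $q^{1+\alpha}-q_1^{1+\alpha}-(q-q_1)^{1+\alpha}$ is of order $q^{\alpha}q_1$ as $q_1\to0$, while the error terms in the expansion are only $o(q^{1+\alpha})$; hence for $q_1\ll q$ the errors swamp the gap and the inequality $I_q<I_{q_1}+I_{q-q_1}$ cannot be deduced this way, yet dichotomy may produce exactly such a splitting. This is why \cite{EGW} (and hence this paper, which inherits that step verbatim) do not argue via the expansion: they prove strict subhomogeneity, $I_{aq}<aI_q$ for $a>1$, by amplitude-scaling ($w\mapsto a^{1/2}w$) the special minimizing sequence built from the periodic minimizers $w_P$ via Lemma~\ref{lemma:approximate}, using the high/low-frequency $L^\infty$ control to handle the nonlinear term and using the long-wave test functions only to guarantee that along this sequence the nonlinear part of the energy is negative and of size at least $cq^{1+\alpha}$; subadditivity then follows from subhomogeneity in the standard way. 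So the long-wave Lemma~\ref{le:long wave} enters as a one-sided bound $I_q\le-m(0)q-cq^{1+\alpha}$, not through a two-sided expansion feeding directly into subadditivity.

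Second, your exclusion of vanishing is misstated: vanishing gives $\mathcal{N}(w_n)\to0$, but $\mathcal{K}(w_n)$ does not vanish; one only gets $\liminf\mathcal{E}(w_n)\ge-\tfrac12 m(0)\norm{w_n}_{L^2}^2=-m(0)q$, so the contradiction must be with $I_q<-m(0)q$ (which the long-wave upper bound provides), not with ``$\liminf\mathcal{E}(w_n)\ge0>I_q$''. Relatedly, $\mathcal{K}$ is not weakly continuous on bounded subsets of $H^1(\R)$ (translates of a fixed profile converge weakly to zero while $\mathcal{K}$ stays fixed); this slip is harmless in your final step only because you already have strong $L^2$ convergence of the translated subsequence, along which $\mathcal{E}$ is continuous. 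With the subadditivity step replaced by the subhomogeneity argument via the special minimizing sequence, and the vanishing comparison corrected to $-m(0)q$, your outline matches the intended proof.
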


\subsection{Technical results}
In our case the long-wave functional $\mathcal{E}_{lw}$ is given by
\begin{align*}
\mathcal{E}_{lw}(w):=-\int_\R\left(\frac{m^{2j_*}(0)}{2(2j_*)!}(w^{(j_*)})^2-\frac{m(0)}{4}w^3\right)\diff x,
\end{align*}
and we also recall the long-wave scaling: 
\[S_{lw}w(x)=\mu^\alpha w(\mu^\beta x),\]
with
\begin{equation}\label{alphabeta_specialcase}
\alpha=\frac{2j_*}{4j_*-1} \quad \text{and} \quad \beta=\frac{1}{4j_*-1}.
\end{equation}
Note that \eqref{alphabeta_specialcase} is a special case of \eqref{alpha-beta}, with $p=2$.

We first present a result corresponding to \cite[Lemma 3.2]{EGW}, which relates $\mathcal{E}$ with $\mathcal{E}_{lw}$.
\begin{lemma}\label{le:long wave}  Let  $w\in W$ with $\norm{w}_{L^\infty}\ll 1$ and $\mathcal{I}(w)=1$. Then
	\begin{align}\label{eq:long wave test}
	\mathcal{E}(S_{lw}w)=-q m(0)+q^{1+\alpha}\mathcal{E}_{lw}(w)+o(q^{1+\alpha}). 
	\end{align}
\end{lemma}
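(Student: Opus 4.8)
The plan is to expand $\mathcal{E}(S_{lw}w)=\mathcal{K}(S_{lw}w)+\mathcal{N}(S_{lw}w)$ piece by piece, keeping careful track of the powers of $q$; throughout I write $v:=S_{lw}w=q^{\alpha}w(q^{\beta}\,\cdot\,)$, so that $\mathcal{I}(v)=q^{2\alpha-\beta}\mathcal{I}(w)=q$ and $\norm{v}_{L^\infty}=q^{\alpha}\norm{w}_{L^\infty}\ll1$. The whole mechanism rests on the two identities $2\alpha-\beta=1$ and $\alpha=2j_*\beta$ (the case $p=2$ of $(p-1)\alpha=2j_*\beta$): they force the quadratic dispersive correction produced by $\mathcal{K}$ and the leading cubic term produced by $\mathcal{N}$ to land at the \emph{same} order $q^{1+\alpha}$, while everything else turns out to be $o(q^{1+\alpha})$, uniformly over the admissible $w$.

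First I would handle the quadratic term. By Plancherel's theorem and the change of variables $k=q^{\beta}\ell$,
\[
\mathcal{K}(v)=-\tfrac12 q^{2\alpha-\beta}\int_\R m(q^{\beta}\ell)\,\abs{\widehat w(\ell)}^2\,\diff\ell=-\tfrac12 q\int_\R m(q^{\beta}\ell)\,\abs{\widehat w(\ell)}^2\,\diff\ell.
\]
Inserting the Taylor expansion $m(q^{\beta}\ell)=m(0)+\tfrac{m^{(2j_*)}(0)}{(2j_*)!}q^{2j_*\beta}\ell^{2j_*}+r(q^{\beta}\ell)$ from (A2), and using $\mathcal{I}(w)=1$ together with the Fourier characterisation of $\norm{w^{(j_*)}}_{L^2}^2$, the first two terms produce exactly $-qm(0)-q^{1+\alpha}\tfrac{m^{(2j_*)}(0)}{2(2j_*)!}\norm{w^{(j_*)}}_{L^2}^2$, since $2j_*\beta=\alpha$. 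For the $r$-term I would combine $r(k)=\mathcal{O}(k^{2j_*+2})$ near the origin with the crude bound $\abs{r(k)}\lesssim1+\abs{k}^{2j_*}$ coming from (A1) and the boundedness of $m$, obtaining $\abs{r(q^{\beta}\ell)}\lesssim q^{(2j_*+2)\beta}\abs{\ell}^{2j_*+2}$ for every $\ell$; since $w\in W\subset H^{2j_*}$ and $2j_*+2\le4j_*$, this term is $\mathcal{O}(q^{1+(2j_*+2)\beta})=o(q^{1+\alpha})$.

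Next I would expand the nonlinear term via the splitting $N=N_h+N_l$ with $N_h(v)=-\tfrac14 v^2Kv$; since $2\Psi(v)=-\tfrac14 v^2+2\Psi_r(v)$, the lower-order part is $N_l(v)=2\Psi_r(v)Kv+2\Psi(v)K\Psi(v)$. Writing $Kv=q^{\alpha}(K_q w)(q^{\beta}\,\cdot\,)$, where $K_q$ is the Fourier multiplier with symbol $m(q^{\beta}\,\cdot\,)$, the change of variables gives $-\int_\R N_h(v)\,\diff x=\tfrac14 q^{3\alpha-\beta}\int_\R w^2 K_q w\,\diff x$, with $3\alpha-\beta=1+\alpha$. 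The global bound $\abs{m(k)-m(0)}\lesssim\abs{k}^{2j_*}$ (valid near $0$ by (A2) and away from $0$ by boundedness of $m$) gives $\norm{K_q w-m(0)w}_{L^2}\lesssim q^{2j_*\beta}\norm{w^{(2j_*)}}_{L^2}\lesssim q^{\alpha}$, whence $\int_\R w^2 K_q w\,\diff x=m(0)\int_\R w^3\,\diff x+\mathcal{O}(q^{\alpha})$ and $-\int_\R N_h(v)\,\diff x=\tfrac{m(0)}{4}q^{1+\alpha}\int_\R w^3\,\diff x+\mathcal{O}(q^{1+2\alpha})$. For $N_l$, using $\abs{\Psi_r(v)}\lesssim\abs{v}^3$, $\abs{\Psi(v)}\lesssim v^2$ (both valid since $\norm{v}_{L^\infty}\ll1$) and the boundedness of $K$ on $L^2$, Hölder's inequality yields $\norm{N_l(v)}_{L^1}\lesssim\norm{v}_{L^\infty}\norm{v^2}_{L^2}\norm{v}_{L^2}+\norm{v^2}_{L^2}^2$, and scaling out the dilation ($\norm{v}_{L^\infty}\lesssim q^{\alpha}$, $\norm{v}_{L^2}\lesssim q^{1/2}$, $\norm{v^2}_{L^2}\lesssim q^{\alpha+1/2}$) bounds this by $\mathcal{O}(q^{1+2\alpha})=o(q^{1+\alpha})$. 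Adding the three contributions and recognising $\mathcal{E}_{lw}(w)=-\int_\R\big(\tfrac{m^{(2j_*)}(0)}{2(2j_*)!}(w^{(j_*)})^2-\tfrac{m(0)}{4}w^3\big)\,\diff x$ gives \eqref{eq:long wave test}; since all errors are estimated purely in terms of $\norm{w}_{H^{2j_*}}<S$ and $\norm{w}_{L^\infty}$, the $o(q^{1+\alpha})$ is uniform over such $w$.

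The step I expect to be the real obstacle is the nonlinear term: one has to be certain that \emph{only} $N_h$ survives at order $q^{1+\alpha}$, i.e. that every occurrence of the cubic remainder $\Psi_r$ or of the doubly-nonlinear factor $K\Psi(v)$ is of strictly higher order in $q$. This is exactly where the smoothing of $K$ is convenient — inside the error estimates one can simply discard $K$ using its $L^2$-boundedness, rather than tracking it through the long-wave scaling — and where the exponent identities $2\alpha-\beta=1$ and $\alpha=2j_*\beta$ do the bookkeeping; the only remaining point of care is keeping the $o(q^{1+\alpha})$ uniform over $w\in W$ with $\norm{w}_{L^\infty}\ll1$, which the explicit rates above provide.
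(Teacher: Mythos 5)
Your proof is correct and follows essentially the same route as the paper: Plancherel plus the Taylor expansion of $m$ for the quadratic part, and the splitting $N=N_h+N_l$ together with the exponent identities $2\alpha-\beta=1$, $\alpha=2j_*\beta$ for the nonlinear part, with all errors controlled through $\norm{w}_{H^{2j_*}}<S$. You in fact supply details the paper leaves implicit, namely the explicit $\mathcal{O}(q^{1+2\alpha})$ bound for the $N_l$-contribution (the paper only remarks it is of lower order) and the corrected splitting $N_l(w)=2\Psi_r(w)Kw+2\Psi(w)K(\Psi(w))$, since the paper's displayed formula for $N_l$ repeats $N$ verbatim, an evident typo.
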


\begin{proof} Recall the definition 
\begin{equation*}
\mathcal{E}(S_{lw}w)=\mathcal{K}(S_{lw}w)+\mathcal{N}(S_{lw}w).
\end{equation*}
We first calculate that
\begin{align*}
&\mathcal{K}(S_{lw}w)\\
&=-\frac{1}{2}\int_{\mathbb{R}}q^{2\alpha}w(q^\beta x)Kw(q^\beta \cdot)(x)\, \mathrm{d}x\\
&=-\frac{1}{2}\int_\mathbb{R}q^{2\alpha}m(k)\abs{\mathcal{F}(w(q^\beta\cdot))(k)}^2\, \mathrm{d}k\\
&=-\frac{1}{2}\int_\mathbb{R}q^{2\alpha-\beta}\left(m(0)+q^{2j_*\beta}\frac{m^{(2j_*)}(0)}{(2j_*)!}k^{2j_*}+r(q^\beta k)\right)\abs{\hat{w}(k)}^2\ \mathrm{d}k\\
&=-q m(0)-q^{2\alpha+(2j_*-1)\beta}\int_\mathbb{R}\frac{m^{(2j_*)}(0)}{2(2j_*)!}(w^{j_*})^2\ \mathrm{d}x-\frac{q^{2\alpha-\beta}}{2}\int_\mathbb{R}r(q^\beta k)\abs{\hat{w}(k)}^2\, \mathrm{d}k,
\end{align*}
and one may continuously estimate the last term as 
\begin{equation*}
\abs{\frac{q^{2\alpha-\beta}}{2}\int_\mathbb{R}r(q^\beta k)\abs{\hat{w}(k)}^2\, \mathrm{d}k}\lesssim q^{2\alpha+(2j_*+1)\beta}\int_\mathbb{R}k^{2j_*+2}\abs{\hat{w}(k)}^2\, \mathrm{d}k,
\end{equation*}
and $\int_\mathbb{R}k^{2j_*+2}\abs{\hat{w}(k)}^2\ \mathrm{d}k$ is uniformly bounded, since $w\in W$. We next consider
\begin{equation*}
\mathcal{N}(S_{lw}w)=-\int_\mathbb{R}N_h(S_{lw}w)+N_l(S_{lw}w)\, \mathrm{d}x.
\end{equation*}
A direct calculation shows that
\begin{align*}
&-\int_\mathbb{R}N_h(S_{lw}w)\ \mathrm{d}x=\int_{\mathbb{R}}\frac{q^{3\alpha}}{4}w^2(q^\beta x)Kw(q^\beta\cdot)(x)\ \mathrm{d}x\\
&=\int_\mathbb{R}\frac{q^{3\alpha-\beta}}{4}\overline{\mathcal{F}(w^2)(k)}\hat{w}(k)\left(m(0)+q^{2j_*\beta}\frac{m^{(2j_*)}(0)}{(2j_*)!}k^{2j_*}+r(q^\beta  k)\right)\, \mathrm{d}k\\
&=q^{3\alpha-\beta}\int_{\mathbb{R}}\frac{m(0)}{4}w^3\, \mathrm{d}x+o(q^{3\alpha-\beta}),
\end{align*}
where we again used that $w\in W$ in order to estimate the remaining terms.
The term $\int_\mathbb{R}N_l(S_{lw}w)\ \mathrm{d}x$ is of lower order and can be estimated in the same way.

Combining all the above estimates yields the identity \eqref{eq:long wave test}.
	
\end{proof}

We next move to the corresponding result of \cite[Lemma 3.2]{EGW}.

\begin{lemma}\label{lemma:approximate}
Let 
\begin{align*}
\mathcal{K}_P(w)=-\frac{1}{2}\int_{-\frac{P}{2}}^{\frac{P}{2}}wKw\ \mathrm{d}x,\quad \mathcal{N}_P(w)=-\int_{-\frac{P}{2}}^{\frac{P}{2}}N(w)\ \mathrm{d}x,
\end{align*}
\begin{align*}
 \ \mathcal{E}_P(w)=\mathcal{K}_P(w)+\mathcal{N}_P(w),
\end{align*}
and let $\{\tilde{w}_P\}$ be a bounded family of functions in $H^1(\mathbb{R})$ with $\norm{\tilde{w}_P}_{L^\infty(\mathbb{R})}\ll1$ such that
\begin{equation*}
\mathrm{supp}(\tilde{w}_P)\subset (-\frac{P}{2},\frac{P}{2}) \quad \mathrm{and}\quad  \mathrm{dist}\big(\pm \frac{P}{2},\mathrm{supp}(\tilde{w}_P)\big)\geq\frac{1}{2}P^\frac{1}{4},
\end{equation*}
and define $w_P\in H_P^1$ by the formula 
\[w_P=\sum_{j\in\mathbb{Z}}\tilde{w}_P(\cdot +jP).\]
\begin{itemize}
\item[(i)] The function $w_P$ satisfies 
\begin{equation*}
\lim_{P\rightarrow \infty}\norm{K\tilde{w}_P-Kw_P}_{H^1(-\frac{P}{2},\frac{P}{2})}=0,\quad  \lim_{P\rightarrow \infty}\norm{K\tilde{w}_P}_{H^1(\abs{x}>\frac{P}{2})}=0.
\end{equation*}
\item[(ii)] The functionals $\mathcal{E}$, $\mathcal{I}$ and $\mathcal{E}_P$, $\mathcal{I}_P$ have the properties that
\begin{equation*}
\lim_{P\rightarrow \infty}\big(\mathcal{E}(\tilde{w}_P)-\mathcal{E}_P(w_P)\big)=0,\quad \mathcal{I}(\tilde{w}_P)=\mathcal{I}_P(w_P),
\end{equation*}
and
\begin{align*}
&\lim_{P\rightarrow \infty}\norm{\mathcal{E}'(\tilde{w}_P)-\mathcal{E}'_P(w_P)}_{H^1(-\frac{P}{2},\frac{P}{2})}=0, &&\lim_{P\rightarrow \infty}\norm{\mathcal{E}'(\tilde{w}_P)}_{H^1(-\frac{P}{2},\frac{P}{2})}=0\\
&\norm{\mathcal{I}'(\tilde{w}_P)-\mathcal{I}'_P(w_P)}_{H^1(-\frac{P}{2},\frac{P}{2})}=0,&& \norm{\mathcal{I}'(\tilde{w}_P)}_{H^1(\abs{x}>\frac{P}{2})}=0.
\end{align*}
\end{itemize}
\end{lemma}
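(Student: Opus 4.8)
\textbf{Proof plan for Lemma \ref{lemma:approximate}.}

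The plan is to adapt the proof of the corresponding result in \cite{EGW} (their Lemma~3.3), tracking the one genuinely new feature: the nonlinearity $N$ here contains the smoothing operator $K$, so the nonlinear estimates in part (ii) are not purely local in the way they were in \cite{EGW}. The overall strategy is the same: $w_P$ is the $P$-periodization of $\tilde w_P$, whose support sits a distance at least $\tfrac12 P^{1/4}$ inside a period, so on $(-P/2,P/2)$ the function $w_P$ agrees with $\tilde w_P$ and differs from it only through the action of $K$ on the translated copies $\tilde w_P(\cdot+jP)$, $j\neq 0$.

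First I would prove (i). On $(-P/2,P/2)$ we have $Kw_P - K\tilde w_P = \sum_{j\neq 0} K\tilde w_P(\cdot+jP)$. The point is that $K$ has symbol $m\in S_\infty^{m_0}$ with $m_0<0$, and in fact one should use that $m$ is smooth and rapidly decaying at infinity is \emph{not} available, but $m_0<0$ together with the off-diagonal decay of the kernel of $K$ is. Concretely, the convolution kernel $\mathcal K_m$ of $K$ satisfies polynomial decay $|\mathcal K_m(x)|\lesssim (1+|x|)^{-M}$ for as many $M$ as we like (this follows from $(A1)$ by integration by parts — if the paper only guarantees finitely many derivatives decaying, one still gets enough decay for the argument), and similarly for its derivatives. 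Since $\tilde w_P$ is bounded in $H^1(\R)\hookrightarrow L^\infty\cap L^2$ and supported at distance $\geq\frac12 P^{1/4}$ from $\pm P/2$, each translate $\tilde w_P(\cdot+jP)$ is, on $(-P/2,P/2)$, evaluated by a kernel tail at distance $\gtrsim |j|P$, giving $\|K\tilde w_P(\cdot+jP)\|_{H^1(-P/2,P/2)}\lesssim (|j|P)^{-M}$ times the bounded $L^2$-norm of $\tilde w_P$; summing over $j\neq 0$ yields a bound $\lesssim P^{-M}$, hence the first limit. The second limit, $\|K\tilde w_P\|_{H^1(|x|>P/2)}\to 0$, is the same estimate with $j=0$: the region $|x|>P/2$ is at distance $\gtrsim P^{1/4}$ from $\supp\tilde w_P$, so the kernel tail gives $\lesssim P^{-M/4}\to 0$.

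Next, part (ii). The identities $\mathcal I(\tilde w_P)=\mathcal I_P(w_P)$ and $\|\mathcal I'(\tilde w_P)-\mathcal I_P'(w_P)\|_{H^1(-P/2,P/2)}=0$, $\|\mathcal I'(\tilde w_P)\|_{H^1(|x|>P/2)}=0$ are immediate since $\mathcal I$ is purely local (no operator $K$) and $w_P=\tilde w_P$ on $(-P/2,P/2)$ while $\tilde w_P=0$ for $|x|>P/2$. For the $\mathcal E$-statements, write $\mathcal E=\mathcal K+\mathcal N$. For the quadratic part $\mathcal K(w)=-\tfrac12\int wKw$: using (i), $\mathcal K(\tilde w_P)-\mathcal K_P(w_P)=-\tfrac12\int_{-P/2}^{P/2}w_P(K\tilde w_P-Kw_P)\,\diff x-\tfrac12\int_{|x|>P/2}\tilde w_PK\tilde w_P\,\diff x$, and both terms are controlled by Cauchy–Schwarz together with the two limits in (i) and the uniform $L^2$-bound on $\tilde w_P$, hence vanish as $P\to\infty$; the derivative statement $\mathcal K'(w)=-Kw$ is handled identically. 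For $\mathcal N$: here $N_h(w)=-\tfrac14 w^2Kw$ and $N_l(w)=2\Psi(w)Kw+2\Psi(w)K\Psi(w)$ with $\Psi(w)=O(w^2)$, so $N(w)$ is a sum of terms each of the schematic form $F(w)\cdot K(G(w))$ where $F,G$ are smooth functions vanishing to order $\geq 1$ (indeed $\geq 2$) at $0$, with $F(w),G(w)$ bounded in $H^1$ uniformly (using $\|\tilde w_P\|_{L^\infty}\ll1$ and the algebra/composition properties of $H^1$). Then $\mathcal N(\tilde w_P)-\mathcal N_P(w_P)$ splits exactly as the $\mathcal K$-difference did: a piece over $(-P/2,P/2)$ involving $K(G(\tilde w_P))-K(G(w_P))$, to which the argument of (i) applies verbatim with $G(\tilde w_P)$ in place of $\tilde w_P$ (note $\supp G(\tilde w_P)\subset\supp\tilde w_P$ since $G(0)=0$), plus a piece over $|x|>P/2$ which vanishes because $F(\tilde w_P)=0$ there and by the tail bound on $K(G(\tilde w_P))$. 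The derivative $\mathcal N'$ produces terms of the same shape (products of smooth functions of $w$ with $K$ applied to such, plus $K$ applied to products), so the identical splitting and kernel-tail estimates close it.

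\textbf{Main obstacle.} The only real point requiring care — and the reason these proofs are reproduced rather than cited — is the nonlocal nature of $N$: one must verify that every term arising in $N(w)$ and in $N'(w)$ can be written as (smooth nonlinearity vanishing at $0$)$\,\times\,K($smooth nonlinearity vanishing at $0$) or $K($product of such$)$, so that the support-separation argument of part (i) transfers. This uses $G(0)=0$ to preserve the support condition $\supp G(\tilde w_P)\subset\supp\tilde w_P$, the boundedness of $G(\tilde w_P)$ in $H^1(\R)$ (which needs $\|\tilde w_P\|_{L^\infty}\ll 1$ so we stay in the domain of the composition and Moser-type estimates apply), and the polynomial off-diagonal decay of the kernel of $K$ and its derivative, which is where assumption $(A1)$ with $m_0<0$ enters. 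Everything else is a routine repetition of the scheme in \cite{EGW}.
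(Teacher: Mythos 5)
Your proposal is correct and takes essentially the same route as the paper: part (i) rests on the off-support decay of $K$ (Proposition \ref{proposition:technical}, which the paper simply cites from \cite{EGW} for (i)), and part (ii) follows by splitting $\mathcal{E}=\mathcal{K}+\mathcal{N}$ and using that compositions $G$ with $G(0)=0$ preserve the support condition so the kernel-tail argument transfers to $G(\tilde{w}_P)$ --- the paper does the same, merely computing $\Psi(\tilde{w}_P)-\Psi(w_P)$ explicitly before invoking the decay estimate. The only slip is quantitative and harmless: the nearest translates ($|j|=1$) are only at distance of order $P^{1/4}$, not $|j|P$, from $(-\frac{P}{2},\frac{P}{2})$, but since the kernel decays at arbitrary polynomial rate the stated limits are unaffected.
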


To prove Lemma \ref{lemma:approximate}, we need the following technical result of \cite[Proposition 2.1]{EGW}.
\begin{proposition}\label{proposition:technical} The linear operator $K$ satisfies\\
	(a)  $K$ belongs to \(C^\infty(H^s(\R),H^{s+|m_0|}(\R))\) \(\cap\) \(C^\infty(\mathcal{S}(\R),\mathcal{S}(\R))\) for each \(s\geq0\). \\
	(b) For each \(j\in \N\) there exists a constant \(C_l=C(\|m^{(l)}\|_{L^2(\R)})>0\) such that 
	\[
	|Kf(x)|\leq \frac{C_l\|f\|_{L^2}}{ \mathrm{dist}\big(x,\supp(f)\big)^l},\quad  x\in \R\setminus \supp (f),
	\]
	for all \(f\in L^2_c(\R)\). 
\end{proposition}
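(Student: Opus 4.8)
The plan is to treat the two parts separately: part (a) is a boundedness statement for a Fourier multiplier, and part (b) is a kernel estimate. For part (a), since $K$ is linear it suffices to show it is \emph{bounded} from $H^s(\R)$ to $H^{s+|m_0|}(\R)$ and \emph{continuous} from $\Schwartz(\R)$ to $\Schwartz(\R)$ — a bounded (resp.\ continuous) linear map between Banach (resp.\ Fréchet) spaces is automatically $C^\infty$, with derivative equal to the map itself and all higher derivatives vanishing. Boundedness on Sobolev spaces follows at once from (A1): writing $\langle k\rangle=(1+k^2)^{1/2}$ and using $|m(k)|\le C_0\langle k\rangle^{m_0}$, we have $\langle k\rangle^{2(s+|m_0|)}|m(k)|^2\le C_0^2\langle k\rangle^{2s}$ because $m_0=-|m_0|$, hence $\norm{Kf}_{H^{s+|m_0|}}\le C_0\norm{f}_{H^s}$ by Plancherel. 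For the Schwartz part, (A1) shows every derivative $m^{(\alpha)}$ is bounded (indeed tends to $0$ at infinity, since $m_0-\alpha<0$), so $m$ is a multiplier of $\Schwartz(\R)$; combined with the continuity of $\F^{\pm1}$ on $\Schwartz(\R)$ this gives $K=\F^{-1}m\F\in C^\infty(\Schwartz(\R),\Schwartz(\R))$.

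For part (b), fix $l\in\N$ with $l\ge1$, and realise $K$ as convolution with its kernel $k:=\F^{-1}m$. The key point is that by (A1), $|m^{(l)}(k)|\le C_l\langle k\rangle^{m_0-l}$ with $m_0-l<-\tfrac12$, so $m^{(l)}\in L^2(\R)$; hence $g_l:=\F^{-1}(m^{(l)})\in L^2(\R)$ with $\norm{g_l}_{L^2}=c\norm{m^{(l)}}_{L^2}$ by Plancherel, and $g_l(z)=(-\I z)^lk(z)$, which in particular exhibits $k$ as a genuine (in fact $C^\infty$) function on $\R\setminus\{0\}$. Consequently $k\in L^2(\{|z|\ge d\})$ for every $d>0$, with
\[ \norm{k}_{L^2(\{|z|\ge d\})}^2=\int_{|z|\ge d}\frac{|z^lk(z)|^2}{|z|^{2l}}\,\diff z\le\frac{1}{d^{2l}}\norm{z^lk}_{L^2}^2=\frac{c^2}{d^{2l}}\norm{m^{(l)}}_{L^2}^2 . \]

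To conclude, I would use the convolution representation $Kf(x)=\int_\R k(x-y)f(y)\,\diff y$, valid for $f\in L^2_c(\R)$ and $x\notin\supp f$: for $f\in C_c^\infty(\R)$ it is classical, $Kf$ being Schwartz by part (a), and it extends to $L^2_c$ by approximating $f$ in $L^2$ by $C_c^\infty$ functions supported in a slightly enlarged compact set still disjoint from $x$. Then, with $d=\mathrm{dist}(x,\supp f)>0$, the substitution $z=x-y$ and Cauchy--Schwarz give, since $x-\supp f\subset\{|z|\ge d\}$,
\[ |Kf(x)|\le\Big(\int_{x-\supp f}|k(z)|^2\,\diff z\Big)^{1/2}\norm{f}_{L^2}\le\norm{k}_{L^2(\{|z|\ge d\})}\norm{f}_{L^2}\le\frac{C_l}{d^l}\norm{f}_{L^2}, \]
with $C_l=c\norm{m^{(l)}}_{L^2}$, as claimed.

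The main obstacle is not any individual estimate but the rigorous justification of the convolution representation: a priori $k=\F^{-1}m$ is merely a tempered distribution and $Kf$ merely an element of $H^{|m_0|}\subset L^2$, which need not embed in $C^0$ when $m_0\in(-\tfrac12,0)$, so one must first check that $k$ is genuinely a $C^\infty$ function on $\R\setminus\{0\}$ (which follows from $z^lk(z)=c\,\F^{-1}(m^{(l)})(z)$ together with $m^{(l)}\in L^1\cap L^2$ for $l\ge1$, and similarly for derivatives) and that $Kf$ possesses a continuous representative on $\R\setminus\supp f$ agreeing there with $k*f$. Everything else is routine multiplier and Plancherel bookkeeping. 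Finally, I would remark that running the same argument with the symbols $(\I k)^jm\in S_\infty^{m_0+j}$ in place of $m$ shows $Kf\in C^\infty(\R\setminus\supp f)$ and produces analogous decay bounds for the derivatives of $Kf$, which is the form in which the estimate is used in Lemma~\ref{lemma:approximate}.
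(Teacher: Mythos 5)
Your proof is correct, and note that the paper itself gives no proof of this proposition—it is quoted directly from \cite{EGW}, Proposition~2.1—whose argument is essentially the one you give: boundedness of the multiplier on $H^s$ and $\Schwartz$ for part (a), and for part (b) the kernel identity $z^l k(z)=c\,\F^{-1}(m^{(l)})(z)$ with $m^{(l)}\in L^2$ (valid for $l\geq 1$, as intended despite the $j$/$l$ typo in the statement), followed by Cauchy--Schwarz on $\{|z|\geq \mathrm{dist}(x,\supp f)\}$. Your attention to the justification of the convolution representation of $Kf$ away from $\supp f$ is exactly the right point to be careful about and is handled adequately.
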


\begin{proof}[Proof of Lemma~\ref{lemma:approximate}]
The limits in (\(\romannumeral1\)) are proved in \cite[Proposition 2.1]{EGW}, so we turn to (\(\romannumeral2\)). Using (\(\romannumeral1\)) we get that $\mathcal{K}(\tilde{w}_P)-\mathcal{K}(w_P)\rightarrow 0$, as $P\rightarrow \infty$. Note that
\begin{equation}\label{estimate:N, step 1}
\begin{aligned}
\mathcal{N}(\tilde{w}_P)&=-2\int_\mathbb{R}\Psi(\tilde{w}_P)K\tilde{w}_P+\Psi(\tilde{w}_P)K(p(\tilde{w}_P))\ \mathrm{d}x\\
&=-2\int_{-\frac{P}{2}}^{\frac{P}{2}}\Psi(w_P)K\tilde{w}_P+\Psi(w_P)K(\Psi(\tilde{w}_P))\ \mathrm{d}x\\
&=-2\int_{-\frac{P}{2}}^\frac{P}{2}\Psi(w_P)K(\tilde{w}_P-w_P)+\Psi(w_P)K\big(\Psi(\tilde{w}_P)-\Psi(w_P)\big)\ \mathrm{d}x\\
&\quad+\mathcal{N}_P(w_P).
\end{aligned}
\end{equation}
In light of (\(\romannumeral1\)) we  have 
\begin{equation}\label{estimate:N, step 2}
\begin{aligned}
&\bigg|\int_{-\frac{P}{2}}^{\frac{P}{2}}\Psi(w_P)K(\tilde{w}_P-w_P)\ \mathrm{d}x\bigg|\\
&\leq \norm{\Psi(w_P)}_{L^2(-\frac{P}{2},\frac{P}{2})}\norm{K(\tilde{w}_P-w_P)}_{L^2(-\frac{P}{2},\frac{P}{2})}\rightarrow 0, \quad \text{as}\  P\longrightarrow \infty.
\end{aligned}
\end{equation}

 Since $\norm{\tilde{w}_P}_{L^\infty}\ll1$, we have $\norm{w_P}_{L^\infty}\ll1$. 
To estimate the second term on the right hand side of \eqref{estimate:N, step 1}, one first calculates
\begin{align*}
\Psi(\tilde{w}_P)-\Psi(w_P)&=\sqrt{1+\tilde{w}_P}-\sqrt{1+\sum_{j\in\mathbb{Z}}\tilde{w}_P(\cdot+jP)}+\frac{1}{2}\sum_{\abs{j}\geq 1}\tilde{w}_P(\cdot jP)\\
&=-\frac{\sum_{\abs{j}\geq 1}\tilde{w}_P(\cdot +jP)}{\sqrt{1+\tilde{w}_P}+\sqrt{1+w_P}}+\frac{1}{2}\sum_{\abs{j}\geq 1}\tilde{w}_P(\cdot +jP)\\
&=\left(\frac{1}{2}-\frac{1}{\sqrt{1+\tilde{w}_P}+\sqrt{1+w_P}}\right)\sum_{\abs{j}\geq 1}\tilde{w}_P(\cdot +jP),
\end{align*}
and then applies Proposition \ref{proposition:technical} to get
\begin{equation}\label{estimate:N, step 3}
\begin{aligned}
&\int_{-\frac{P}{2}}^\frac{P}{2}\big|K\big(\Psi(\tilde{w}_P)-\Psi(w_P)\big)\big|^2\, \mathrm{d}x\\
&\leq \int_{-\frac{P}{2}}^\frac{P}{2}\bigg|\sum_{\abs{j}\geq 1}K\left[\tilde{w}_P(\cdot +jP)\big(\frac{1}{2}-\frac{1}{\sqrt{1+\tilde{w}_P}+\sqrt{1+w_P}}\big)\right]\bigg|^2\, \mathrm{d}x\\
&\lesssim \int_{-\frac{P}{2}}^\frac{P}{2}\left(\sum_{\abs{j}\geq 1}\frac{\norm{\tilde{w}_P(\cdot +jP)\left(\frac{1}{2}-\frac{1}{\sqrt{1+\tilde{w}_P}+\sqrt{1+w_P}}\right)}_{L^2(-\frac{P}{2},\frac{P}{2})}}{\text{dist}\big(x+jP,\text{supp}(\tilde{w}_P)\big)^3}\right)^2\, \mathrm{d}x\\
&\lesssim \|\tilde{w}_P\|_{L^2}\int_{-\frac{P}{2}}^\frac{P}{2}\big(\sum_{\abs{j}\geq 1}\frac{1}{(jP+\frac{1}{2}P^{\frac{1}{4}})^3}\big)^2\, \mathrm{d}x\\
&\rightarrow 0, \quad \text{as}\  P\longrightarrow \infty.
\end{aligned}
\end{equation}
Hence we obtain 
\begin{equation}\label{estimate:N, step 4}
\begin{aligned}
&\bigg|\int_{-\frac{P}{2}}^\frac{P}{2}\Psi(w_P)K\big(\Psi(\tilde{w}_P)-\Psi(w_P)\big)\ \mathrm{d}x\bigg|\\
&\leq\norm{\Psi(w_P)}_{L^2(-\frac{P}{2},\frac{P}{2})}\norm{K\big(\Psi(\tilde{w}_P)-\Psi(w_P)\big)}_{L^2(-\frac{P}{2},\frac{P}{2})}\rightarrow 0, \quad \text{as}\  P\longrightarrow \infty.
\end{aligned}
\end{equation} 
From \eqref{estimate:N, step 1}, \eqref{estimate:N, step 2} and \eqref{estimate:N, step 4}, it follows that $\mathcal{N}(\tilde{w}_P)-\mathcal{N}_P(w_P)\rightarrow 0$, which in turn implies that 
\[\mathcal{E}(\tilde{w}_P)-\mathcal{E}_P(w_P)\rightarrow 0, \quad \text{as}\  P\longrightarrow \infty.\] 

The equality \(\mathcal{I}(\tilde{w}_P)=\mathcal{I}_P(w_P)\) is immediate.  

A direct calculation yields 
\begin{equation*}
\mathcal{N}'(w)=-\left(\frac{1}{\sqrt{1+w}}-1\right)Kw-\frac{2}{\sqrt{1+w}}K(\Psi(w)),
\end{equation*}
so we may estimate
\begin{align*}
&\norm{\mathcal{N}'(\tilde{w}_P)-\mathcal{N}'_P(w_P)}_{L^2(-\frac{P}{2},\frac{P}{2})}\\
&\leq \norm{\left(\frac{1}{\sqrt{1+w_P}}-1\right)(K\tilde{w}_P-Kw_P)}_{L^2(-\frac{P}{2},\frac{P}{2})}\\
&\quad +\norm{\frac{2}{\sqrt{1+w_P}}K\big(\Psi(\tilde{w}_P)-\Psi(w_P)\big)}_{L^2(-\frac{P}{2},\frac{P}{2})}\\
&\rightarrow 0,  \quad \text{as}\  P\longrightarrow \infty, 
\end{align*}
where we have used (\(\romannumeral1\)) and \eqref{estimate:N, step 3}. One can similarly show that 
\begin{equation*}
\norm{\frac{\diff}{\diff x}\mathcal{N}'(\tilde{w}_P)-\frac{\diff}{\diff x}\mathcal{N}'_P(w_P)}_{L^2(-\frac{P}{2},\frac{P}{2})}\rightarrow 0,\quad \text{as}\  P\rightarrow \infty.
\end{equation*}
Hence
\begin{equation*}
\norm{\mathcal{E}'(\tilde{w}_P)-\mathcal{E}'_P(w_P)}_{H^1(-\frac{P}{2},\frac{P}{2})}\rightarrow 0,\quad \text{as}\  P\rightarrow \infty.
\end{equation*}

Note that $\frac{1}{\sqrt{1+\tilde{w}_P}}-1=0$ for $\abs{x}>\frac{P}{2}$, we calculate 
\begin{align*}
&\norm{\mathcal{N}'(\tilde{w}_P)}_{L^2(\abs{x}>\frac{P}{2})}\\
&=\norm{\left(\frac{1}{\sqrt{1+\tilde{w}_P}}-1\right)K\tilde{w}_P+\frac{2}{\sqrt{1+\tilde{w}_P}}K(\Psi(\tilde{w}_P))}_{L^2(\abs{x}>\frac{P}{2})}\\
&=\norm{\frac{2}{\sqrt{1+\tilde{w}_P}}K(\Psi(\tilde{w}_P))}_{L^2(\abs{x}>\frac{P}{2})}.
\end{align*}
Since $\supp(\Psi(\tilde{w}_P))=\supp(\tilde{w}_P)$,  we have \(\norm{K(\Psi(\tilde{w}_P))}_{L^2(\abs{x}>\frac{P}{2})}\rightarrow 0\).
It follows that 
\[\norm{\mathcal{N}'(\tilde{w}_P)}_{L^2(\abs{x}>\frac{P}{2})}\rightarrow 0,\quad \text{as}\  P\rightarrow \infty.\]
 A similar calculation shows that 
 \[\norm{\frac{\diff}{\diff x}\mathcal{N}'(\tilde{w}_P)}_{L^2(\abs{x}>\frac{P}{2})}\rightarrow 0.\]
Consequently, we have \[\norm{\mathcal{N}'(\tilde{w}_P)}_{H^1(\abs{x}>\frac{P}{2})}\rightarrow 0, \quad \text{as}\  P\rightarrow \infty.\]
\end{proof}

Just as in \cite[Theorem 6.3]{EGW} we can relate the minimizers of $\mathcal{E}$ with those of $\mathcal{E}_{lw}$: 
\begin{equation*}
\sup_{w\in D_q}\text{dist}_{H^{j_*}(\mathbb{R})}(S_{lw}^{-1}w,D_{lw})\rightarrow 0,\quad \text{ as }q\rightarrow 0,
\end{equation*}
where $D_{lw}$ is the set of minimizers of $\mathcal{E}_{lw}$ over the set 
\[\{w\in H^{j_*}(\mathbb{R}):\ \mathcal{I}(w)=1\},\]
and $D_q$ is the set of minimizers of $\mathcal{E}$ over $V_{q,R}$. 

We finally include a regularity result for the travelling wave solutions of \eqref{vareq} which corresponds to \cite[Lemma 2.3]{EGW}.
\begin{lemma}\label{le:regularity} Let \(w\) be a solution of \eqref{vareq} in with $\norm{w}_{L^\infty}\ll1$. Then for any \(k\in \mathbb{N}_+\), \(w\in H^{k}\) and satisfies
	\[
	\|w\|_{H^k}\leq C(k,\|w\|_{H^1}).   
	\]   
\end{lemma}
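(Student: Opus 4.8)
The plan is to run a bootstrap (elliptic-type regularity) argument on the travelling-wave equation \eqref{vareq}, exploiting that $K$ is a smoothing operator of order $|m_0|>0$ by Proposition~\ref{proposition:technical}(a). Rewrite \eqref{vareq} as
\[
\lambda w=\frac{2}{\sqrt{1+w}}K\big(\sqrt{1+w}-1\big)=\frac{2}{\sqrt{1+w}}K\!\left(\tfrac{w}{2}+\Psi(w)\right),
\]
so that, with $\lambda=c^2>0$ bounded away from zero,
\[
w=\frac{1}{c^2\sqrt{1+w}}\,K\big(w+2\Psi(w)\big).
\]
The strategy is: if $w\in H^k$ with $\norm{w}_{L^\infty}\ll 1$, then the right-hand side gains $|m_0|$ derivatives from $K$, provided the algebraic/composition factors $\tfrac{1}{\sqrt{1+w}}$ and $\Psi(w)=\sqrt{1+w}-1-\tfrac w2$ also lie in $H^k$. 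Since $H^k(\R)$ for $k\geq 1$ is a Banach algebra and is closed under composition with smooth functions vanishing appropriately at $0$ (here $\norm{w}_{L^\infty}\ll1$ keeps us in the region where $s\mapsto(1+s)^{-1/2}$ and $s\mapsto\sqrt{1+s}-1-s/2$ are smooth), this closure holds, and moreover one gets a bound on the relevant $H^k$ norms depending only on $k$ and $\norm{w}_{H^1}$ (through $\norm{w}_{H^k}$). Hence $w\in H^{\min(k+|m_0|,\,k+1)}$ with a quantitative bound, and iterating this finitely many times (each step gaining a fixed positive amount of smoothness, truncated at integer levels to stay inside the algebra framework) pushes $w$ into $H^k$ for every $k\in\N_+$.

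Concretely, I would proceed as follows. First, the base case: $w\in H^1$ by hypothesis, and $H^1(\R)\hookrightarrow L^\infty(\R)$, consistent with $\norm{w}_{L^\infty}\ll1$. Second, establish the inductive step: assume $w\in H^j$ for some $j\geq 1$ with $\norm{w}_{H^j}\leq C(j,\norm{w}_{H^1})$; show $w+2\Psi(w)\in H^j$ with norm controlled by $C(j,\norm{w}_{H^1})$, using the algebra property and the composition estimate $\norm{G(w)}_{H^j}\leq C(j,\norm{w}_{L^\infty})\norm{w}_{H^j}$ for $G$ smooth with $G(0)=0$ (applied to $G(s)=\Psi(s)$, which even vanishes to order $2$ at $0$). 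Third, apply Proposition~\ref{proposition:technical}(a): $K\big(w+2\Psi(w)\big)\in H^{j+|m_0|}$ with norm $\leq C(j)\,\norm{w+2\Psi(w)}_{H^j}$. Fourth, multiply by $\tfrac{1}{c^2\sqrt{1+w}}$: since $\tfrac{1}{\sqrt{1+w}}\in H^{\max(j,1)}$ (again by composition, $\norm{w}_{L^\infty}\ll1$) and $H^s$ is an algebra for $s\geq 1$, the product lies in $H^{\min(j+|m_0|,\,\max(j,1))}$; combined with the a priori bound $\lambda=c^2$ bounded below (which follows from the constraint $\mathcal{I}(w)=q$ and the scaling information, or simply from $w$ being a nontrivial solution), we conclude $w\in H^{j'}$ with $j'=\min(j+|m_0|,j+1)>j$ and $\norm{w}_{H^{j'}}\leq C(j',\norm{w}_{H^1})$. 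Fifth, iterate: after finitely many steps $w\in H^j$ for the desired integer $j$, and one more round of the same estimates gives $w\in H^{j+1}$, closing the induction on $k$.

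The main obstacle, and the only point requiring genuine care rather than bookkeeping, is the interplay between the nonlinearity (composition with the smooth functions $s\mapsto(1+s)^{-1/2}$ and $s\mapsto\Psi(s)$, and multiplication) and the fractional smoothing exponent $|m_0|$: one must be sure that at each stage the factor $\tfrac{1}{\sqrt{1+w}}$ has at least as much regularity as $K(w+2\Psi(w))$, so that the product does not lose the gain from $K$. This is handled by always truncating the claimed regularity at $\max(\text{current integer level},1)$ so that we stay within the Banach-algebra range $H^s$, $s\geq1$, where Moser-type composition and product estimates are available with constants depending only on $\norm{w}_{L^\infty}$ (which is $\ll1$, hence uniformly controlled) and the lower-order $H^s$ norm; the constant $\lambda=c^2$ enters only as a harmless multiplicative factor once it is bounded below. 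A secondary, purely technical point is the differentiation of the composition in the $\tfrac{\diff}{\diff x}$ estimates — handled exactly as in the proof of Lemma~\ref{lemma:approximate} above, by the chain and product rules within $H^s$. With these in place the quantitative bound $\norm{w}_{H^k}\leq C(k,\norm{w}_{H^1})$ propagates through the finite induction.
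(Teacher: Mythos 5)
Your inductive step does not actually produce a gain in regularity, and this is a genuine gap rather than bookkeeping. From $w\in H^j$ you get $K(w+2\Psi(w))\in H^{j+|m_0|}$, but the prefactor $\tfrac{1}{\sqrt{1+w}}=1+h$ has only the regularity of $w$ itself, $h\in H^j$, and a product of an $H^j$ function with an $H^{j+|m_0|}$ function lies in general only in $H^{j}$ (the less regular factor caps the product). Indeed your own step four states that the product lies in $H^{\min(j+|m_0|,\,\max(j,1))}=H^j$, yet you then conclude $w\in H^{j'}$ with $j'=\min(j+|m_0|,j+1)>j$; these two statements are inconsistent, and only the first is justified. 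The condition you identify as "the main obstacle" — that $\tfrac{1}{\sqrt{1+w}}$ should have at least as much regularity as $K(w+2\Psi(w))$ — is precisely what fails at every stage, since $h$ is a smooth function of $w$ and can never be smoother than $w$; "truncating at the current integer level" does not repair this, it just means the iteration never leaves $H^1$. The fixed-point identity $w=\tfrac{1}{c^2\sqrt{1+w}}K(w+2\Psi(w))$ is perfectly consistent with $w$ having no more than $H^1$ regularity as far as these product/composition estimates can see, so no bootstrap closes from this form of the equation alone. (A minor additional point: in the lemma $\lambda=c^2$ is simply a fixed positive parameter of \eqref{vareq}; no argument via the constraint $\mathcal{I}(w)=q$ is needed or available here.)

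The missing idea — and the way the paper proceeds — is to change variables to $f=\sqrt{1+w}-1$ and then \emph{differentiate} the resulting equation. One gets $\partial_x f=\tfrac{2}{\lambda[(1+f)(2+f)+f(2+f)+f(1+f)]}K\partial_x f$, in which the unknown $\partial_x f$ appears linearly under $K$ and the multiplicative coefficient depends only on $f$, which is one derivative smoother than $\partial_x f$. This headroom is what makes the bootstrap work: for $f\in H^l$ the map $g\mapsto \phi_f(Kg)$ sends $H^s$ into $H^{\min\{l,\,s+|m_0|\}}$, so iterating within a fixed level $l$ raises $\partial_x f$ from $H^{l-1}$ in increments of $|m_0|$ until it reaches the cap $H^{l}$, i.e. $f\in H^{l+1}$; then the cap itself moves up and one inducts on $l$, keeping track of the constants to get $\|f\|_{H^k}\le C(k,\|f\|_{H^1})$, and finally recovers $w=f^2+2f$ by the algebra property. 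If you want to keep your formulation in the variable $w$, you would have to differentiate \eqref{vareq} (or restructure it) in an analogous way; without some such device, the argument as written does not prove the lemma.
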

\begin{proof} Let 
	\[
	f=\sqrt{1+w}-1,
	\] 
then one has \(\|f\|_{L^\infty}\ll1\) due to $\norm{w}_{L^\infty}\ll1$. In view of \eqref{vareq}, \(f\) solves 
	\begin{equation}\label{eqf}
	f=\frac{2}{\lambda (1+f)(2+f)}Kf. 
	\end{equation}
	Differentiating in \eqref{eqf} yields 
	\begin{align}\label{eq: direvative}
	\partial_xf=\frac{2}{\lambda [(1+f)(2+f)+f(2+f)+f(1+f)]}K\partial_xf. 
	\end{align}
	The denominator is positive due to \(\|f\|_{L^\infty}\ll1\).

	Let \(l\in \{1,2,\cdots,k\}\). For each fixed \(f\in H^l\) we define a formula \(\phi_f\) by 
	\[
	\phi_f(g)=\frac{2}{\lambda [(1+f)(2+f)+f(2+f)+f(1+f)]}g.
	\]
	Then one now may follow the argument in [EGW, Lemma 2.3] by using the properties of \(\phi_f\) and \(K\) to show  
	\begin{align*}
	\|\partial_xf\|_{H^l}\leq C(\|f\|_{H^1})\|\partial_xf\|_{L^2}.  
	\end{align*}
	
	For completeness, we give its proof here. For any \(s\in [0,l]\),  it is easy to see that \(\phi_f\) and \(K\) define an operator in \(B(H^s,H^s)\) and  \(B(H^s,H^{s+|m_0|})\), respectively. Thus the composition 
	\[
	\psi_f=\phi_f\circ K\in B(H^s,H^{s_*}),\quad s_*=\min\{l,s+|m_0|\},
	\]
	and the norm of \(\psi_f\) depends upon \(\|f\|_{H^l}\). 
	Consequently, any solution \(g\) of \(g=\psi_f(g)\) belongs to \(H^{s_*}\) and satisfies  
	\[
	\|g\|_{H^{s_*}}\leq C_{l,\|f\|_{H^l}}\|g\|_{H^s}.    
	\] 
	Applying this argument recursively, one finds that any solution \(g\in L^2\) belongs to \(H^l\) and 
	satisfies  
	\[
	\|g\|_{H^l}\leq C(l,\|f\|_{H^l})\|g\|_{L^2}.    
	\] 
	
	Since \eqref{eq: direvative} is equivalent to \(\partial_xf=\psi_f(\partial_xf)\), a bootstrap argument 
	shows that \(f^{\prime}\in H^l\) with  
	\[
	\|\partial_xf\|_{H^l}\leq C(l,\|f\|_{H^1})\|\partial_xf\|_{L^2},\ l=1,2,\cdots,k. 
	\] 
So far we have shown that 
\[
\|f\|_{H^k}\leq C(k,\|f\|_{H^1}).   
\] 	
	Finally, recalling that \(w=f^2+2f\) and \(H^l\) is an algebra, we therefore obtain  
	\[
	\|w\|_{H^k}\leq C(k,\|f\|_{H^1})\leq C(k,\|w\|_{H^1}),  
	\] 
	where we have used \(\|w\|_{L^\infty}\ll1\) in the last inequality.

\end{proof}
\begin{remark}
The results of the present work may be extended to a more general class of nonlinearities $N$. On the one hand, we have that the leading order part of $N$ is cubic, but this could be extended to higher power nonlinearities.
On the other hand, the multiplier operator $K$ appearing in $N$ can be replaced by an operator $K'$ belonging to a wider class of Fourier multipliers. For instance, it is not necessary for the symbol of this $K'$ to be of negative order. An example is $K'=\text{Id}$, which yields the nonlinearities studied in \cite{EGW}.   
\end{remark}

\section{Acknowledgment} Both authors would like to thank M. Ehrnstr{\"o}m and E. Wahl{\'e}n for suggesting this topic.

\bibliographystyle{siam}

\begin{thebibliography}{10}

\bibitem{MR2991247}
{\sc P.~Aceves-S\'{a}nchez, A.~A. Minzoni, and P.~Panayotaros}, {\em Numerical
  study of a nonlocal model for water-waves with variable depth}, Wave Motion,
  50 (2013), pp.~80--93.

\bibitem{MR3603270}
{\sc G.~Bruell, M.~Ehrnstr\"{o}m, and L.~Pei}, {\em Symmetry and decay of
  traveling wave solutions to the {W}hitham equation}, J. Differential
  Equations, 262 (2017), pp.~4232--4254.

\bibitem{Buffoni2004}
{\sc B.~Buffoni}, {\em {Existence and conditional energetic stability of
  capillary-gravity solitary water waves by minimisation}}, Archive for
  Rational Mechanics and Analysis, 173 (2004), pp.~25--68.

\bibitem{MR3844340}
{\sc J.~D. Carter}, {\em Bidirectional {W}hitham equations as models of waves
  on shallow water}, Wave Motion, 82 (2018), pp.~51--61.

\bibitem{doi:10.1111/sapm.12221}
{\sc K.~M. Claassen and M.~A. Johnson}, {\em Numerical bifurcation and spectral
  stability of wavetrains in bidirectional whitham models}, Studies in Applied
  Mathematics, 141, pp.~205--246.

\bibitem{ED}
{\sc E.~Dinvay}, {\em On well-posedness of a dispersive system of the
  {W}hitham–-{B}oussinesq type}, Applied Mathematics Letters, 88 (2018),
  pp.~13--20.

\bibitem{DDK}
{\sc E.~Dinvay, D.~Dutykh, and H.~Kalisch}, {\em A comparative study of
  bi-directional whitham systems}, submitted for publication,  (2018).

\bibitem{MR3564304}
{\sc V.~Duch\^{e}ne, S.~Israwi, and R.~Talhouk}, {\em A new class of two-layer
  {G}reen-{N}aghdi systems with improved frequency dispersion}, Stud. Appl.
  Math., 137 (2016), pp.~356--415.

\bibitem{MR3841973}
{\sc V.~Duch\^{e}ne, D.~Nilsson, and E.~Wahl\'{e}n}, {\em Solitary {W}ave
  {S}olutions to a {C}lass of {M}odified {G}reen--{N}aghdi {S}ystems}, J. Math.
  Fluid Mech., 20 (2018), pp.~1059--1091.

\bibitem{EGW}
{\sc M.~Ehrnstr{\"{o}}m, M.~D. Groves, and E.~Wahl{\'{e}}n}, {\em {On the
  existence and stability of solitary-wave solutions to a class of evolution
  equations of Whitham type}}, Nonlinearity, 25 (2012), pp.~2903--2936.

\bibitem{Ehrnström2018}
{\sc M.~Ehrnstr{\"o}m, M.~A. Johnson, and K.~M. Claassen}, {\em Existence of a
  highest wave in a fully dispersive two-way shallow water model}, Archive for
  Rational Mechanics and Analysis,  (2018).

\bibitem{MR2555644}
{\sc M.~Ehrnstr\"{o}m and H.~Kalisch}, {\em Traveling waves for the {W}hitham
  equation}, Differential Integral Equations, 22 (2009), pp.~1193--1210.

\bibitem{2017arXiv170804551E}
{\sc M.~{Ehrnstr{\"o}m}, L.~{Pei}, and Y.~{Wang}}, {\em {A conditional
  well-posedness result for the bidirectional Whitham equation}}, arXiv:
  1708.04551, 2017.

\bibitem{2016arXiv160205384E}
{\sc M.~{Ehrnstr\"om} and E.~{Wahl{\'e}n}}, {\em {On Whitham's conjecture of a
  highest cusped wave for a nonlocal dispersive equation}}, arXiv: 1602.05384,
  2016.

\bibitem{MR2847283}
{\sc M.~D. Groves and E.~Wahl\'{e}n}, {\em On the existence and conditional
  energetic stability of solitary gravity-capillary surface waves on deep
  water}, J. Math. Fluid Mech., 13 (2011), pp.~593--627.

\bibitem{MR3749383}
{\sc V.~M. Hur and L.~Tao}, {\em Wave breaking in a shallow water model}, SIAM
  J. Math. Anal., 50 (2018), pp.~354--380.

\bibitem{2018arXiv180504372K}
{\sc H.~{Kalisch} and D.~{Pilod}}, {\em {On the local well-posedness for a full
  dispersion Boussinesq system with surface tension}}, arXiv: 1805.04372, 2018.

\bibitem{MR3763731}
{\sc C.~Klein, F.~Linares, D.~Pilod, and J.-C. Saut}, {\em On {W}hitham and
  related equations}, Stud. Appl. Math., 140 (2018), pp.~133--177.

\bibitem{MR3060183}
{\sc D.~Lannes}, {\em The water waves problem}, vol.~188 of Mathematical
  Surveys and Monographs, American Mathematical Society, Providence, RI, 2013.
\newblock Mathematical analysis and asymptotics.

\bibitem{MR778974}
{\sc P.-L. Lions}, {\em The concentration-compactness principle in the calculus
  of variations. {T}he locally compact case. {II}}, Ann. Inst. H. Poincar\'{e}
  Anal. Non Lin\'{e}aire, 1 (1984), pp.~223--283.

\bibitem{MR3390078}
{\sc D.~Moldabayev, H.~Kalisch, and D.~Dutykh}, {\em The {W}hitham equation as
  a model for surface water waves}, Phys. D, 309 (2015), pp.~99--107.

\bibitem{MR3668593}
{\sc J.-C. Saut, C.~Wang, and L.~Xu}, {\em The {C}auchy problem on large time
  for surface-waves-type {B}oussinesq systems {II}}, SIAM J. Math. Anal., 49
  (2017), pp.~2321--2386.

\bibitem{arXiv:1802.10040}
{\sc A.~Stefanov and D.~Wright}, {\em {Small amplitude traveling waves in the
  full-dispersion Whitham equation}}, arXiv:1802.10040.

\bibitem{MR3523508}
{\sc S.~Trillo, M.~Klein, G.~F. Clauss, and M.~Onorato}, {\em Observation of
  dispersive shock waves developing from initial depressions in shallow water},
  Phys. D, 333 (2016), pp.~276--284.

\bibitem{MR0671107}
{\sc G.~B. Whitham}, {\em Variational methods and applications to water waves},
   (1970), pp.~153--172.

\end{thebibliography}

\end{document}